\newtheorem{thm}{Theorem}[section]
\newtheorem{lem}[thm]{Lemma}
\theoremstyle{definition}
\newtheorem{defn}[thm]{Definition}
\numberwithin{equation}{section}
\newcommand{\K}{\mathbb K}
\newcommand{\N}{\mathbb N}
\newcommand{\eps}{\varepsilon}
\newcommand{\xs}{x^\ast}
\newcommand{\Xs}{X^{\ast}}
\DeclareMathOperator{\re}{Re}
\begin{document}

\title[New applications of extremely regular function spaces]%
{New applications of extremely regular function spaces}%
\author[Abrahamsen]{Trond A.~Abrahamsen}
\author[Nygaard]{Olav Nygaard}
\author[P\~{o}ldvere]{M\"{a}rt P\~{o}ldvere}

\address{Department of Mathematics, University of Agder, Servicebox 422, 4604 Kristiansand, Norway.}
\email{Trond.A.Abrahamsen@uia.no}
\urladdr{http://home.uia.no/trondaa/index.php3}
\email{Olav.Nygaard@uia.no}
\urladdr{http://home.uia.no/olavn/}

\address{Institute of Mathematics and Statistics, University of Tartu, J.~Liivi 2, 50409 Tartu, Estonia}
\email{mart.poldvere@ut.ee}

\thanks{Research by the third-named author supported, in part,
by institutional research funding IUT20-57 of the Estonian Ministry of
Education and Research.}
%
%
\subjclass[2010]{46B20; 46B22}%
\keywords{Extremely regular function space, strong diameter 2
  property, almost square space, octahedrality, Daugavet property}%
%
\begin{abstract}
Let $L$ be an infinite locally compact Hausdorff topological space.
We show that extremely regular subspaces of $C_0(L)$ have very strong
diameter $2$ properties and,
for every real number $\eps$ with $0<\eps<1$, contain an
$\eps$-isometric copy of $c_0$.
If $L$ does not contain isolated points they even have the Daugavet
property, and thus contain an asymptotically isometric copy of
$\ell_1$.
\end{abstract}
\maketitle

\section{Introduction}

Throughout, let $L$ be an infinite locally compact Hausdorff topological space,
and denote as usual by $C_0(L)$ the Banach space of continuous
$\K$-valued functions on $L$ that ``vanish at infinity'', where $\K$
is the field of either real or complex numbers.

\begin{defn}[Cengiz \cite{Cengiz--PAMS}]\label{def:er}
An \emph{extremely regular function space} is a linear subspace
$\mathcal{A}$ of $C_0 (L)$ such that for every $x_0\in L$,
every real number~$\eps$ with $0<\eps<1$, and every open neighbourhood
$V$ of $x_0$, there exists an $f\in\mathcal{A}$ such that
\[
\|f\|=1=f(x_0)>\eps>\sup_{x\in L\setminus V}|f(x)|.
\]
\end{defn}

The interest in extremely regular function spaces came from their
importance in Banach--Stone type theorems.
An example, also due to Cengiz \cite{Cengiz--PAMS}, is as follows:
\emph{If $L_1$ and $L_2$ are locally compact Hausdorff topological spaces
such that there exists a linear isomorphism $\varphi$ from an
extremely regular subspace of $C_0 (L_1)$
onto such a subspace of $C_0(L_2)$ with $\|\varphi\|\|\varphi^{-1}\| <
2$, then $L_1$ and $L_2$ are homeomorphic}
(here $C_0(L_1)$ and $C_0(L_2)$ are complex spaces).
Properties of extremely regular function spaces were studied in \cite{Cengiz--Pacific}.

In this paper, we demonstrate that extremely regular function spaces
play a role in a quite recent theory of Banach spaces, namely that
involving \emph{Daugavet spaces, diameter 2 spaces, and octahedral
  spaces}. Let us briefly explain some main lines of this theory
before returning to extremely regular function spaces and our results.

Let $X$ be a Banach space and $B_X$ its unit ball.
By a \emph{slice} of $B_X$ we mean a set of the form $S(\xs, \eps):=\{x\in B_X\colon \re\xs(x)>1-\eps\}$,
where $\xs$ is in the unit sphere $S_{\Xs}$ of $\Xs$ and $\eps > 0$.
A \emph{finite convex combination of slices} of~$B_X$ is a set $S$ of the form
$S=\sum_{i=1}^{n}\lambda_i S(\xs_i,\eps_i)$ where $n\in\N$, $\lambda_i>0$, $\sum_{i=1}^{n}\lambda_i=1$, $\xs_i \in S_{\Xs}$, and $\eps_i > 0$.

\begin{defn}\label{defn:diam2p2}
A Banach space $X$ has the \emph{strong diameter $2$ property}
(briefly, \emph{SD$2$P}) if every finite convex combination of slices
of $B_X$ has diameter~2.
\end{defn}

A lemma by Bourgain \cite[page~26, Lemma~II.1]{GGMS} says that
\emph{every non-empty relatively weakly open subset of $B_X$ contains
  a finite convex combination of slices}.
Thus the SD$2$P implies that every non-empty relatively weakly open
subset of $B_X$ has diameter $2$,
which in turn implies that every slice of $B_X$ has diameter $2$.
None of these implications is reversible (\cite{BG&L-P&RZ15--JMAA}, \cite{HL}).

It is an important observation of Deville and Godefroy from the late
1980s, stated without proof in \cite{G},
that $X$ having SD$2$P is equivalent to $\Xs$ being \emph{octahedral}.
A Banach space $Z$ is \emph{octahedral} if, for every
finite-dimensional subspace $F$ of $Z$ and every $\eps>0$,
there exists a $y\in S_Z$ such that
\[
\|x+ty\|\geq (1-\epsilon)(\|x\|+|t|)\quad\text{for every $x\in F$ and every $t\in\K$.}
\]
A complete proof of this equivalence can be found in \cite[Corollary
2.2]{BG&L-P&RZ14--JFA}
(the proof is carried out for the real case, but it is not too hard to
see that the result holds also in the complex case).

\begin{defn} \label{defn:diam2pmore}
A Banach space $X$
  \begin{enumerate}
\item
is \emph{almost square} (briefly, \emph{ASQ}) if whenever $n\in\N$ and $x_1,\dotsc,x_n\in S_X$,
there exists a sequence $(y_k)$ in $B_X$ such that $\|x_i \pm y_k\|\xrightarrow[k\to\infty]{}1$
for every $i\in\{1,\dotsc,n\}$ and $\|y_k\|\xrightarrow[k\to\infty]{}1$.
\item
has the \emph{symmetric strong diameter $2$ property} (briefly, \emph{SSD$2$P})
if whenever $n\in\N$, $S_1,\dotsc,S_n$ are slices of $B_X$, and $\eps>0$,
there exist $x_i\in S_i$, $i=1,\dotsc,n$, and $y\in B_X$ such that $x_i\pm y\in S_i$ for every $i\in\{1,\dotsc,n\}$ and $\|y\|>1-\eps$.
\end{enumerate}
\end{defn}

ASQ Banach spaces were studied in \cite{ALL}. The SSD$2$P has not been fully explored yet, but can be found in \cite[Lemma~4.1]{ALN1},
where it is observed that the SD$2$P is implied by the SSD$2$P.
In turn, it is not too hard to show that ASQ Banach spaces have the SSD$2$P.
On the other hand, the space $L_1[0,1]$ has the SD$2$P, but not the SSD$2$P,
and the space $C[0,1]$ has the SSD$2$P, but is not ASQ.

The property of a Banach space to be ASQ---and also the SSD$2$P---is rather strong.
The widest class of spaces known to be ASQ are non-reflexive $M$-embedded spaces~\cite[Corollary 4.3]{ALL}.
Also, $c_0(X)$ is ASQ for any Banach space $X$.
The widest class of spaces known to have the SSD$2$P are uniform algebras \cite[Theorem~4.2]{ALN1}.
Also, $\ell_\infty(X)$ has the SSD$2$P for any Banach space~$X$.

Let us also relate the \emph{Daugavet property} to the diameter $2$ properties.
Recall that a bounded linear operator $T$ on a Banach space $X$ is said to satisfy the \emph{Daugavet equation} if $\|I+T\|=1+\|T\|$.
Daugavet himself discovered in \cite{D} that every compact operator on $C[0,1]$ satisfies this equation,
thus initiating a very important topic in the theory of Banach spaces.
In \cite{L}, Lozanovski\u{\i} obtained the analogous result for $L_1[0,1]$.

\begin{defn}\label{defn:daugavet}
A Banach space $X$ has the \emph{Daugavet property} if every rank~$1$ operator on $X$ satisfies the Daugavet equation.
\end{defn}

\noindent%
Note that if a Banach space $X$ has the Daugavet property, then, in fact, every weakly compact operator on $X$ enjoys the Daugavet equation
(see, e.g., \cite[Theorem 2.3]{KShSW} or \cite[Theorem 2.7]{W}).

Towards the end of the 1990s, the Daugavet property was described in geometrical terms
(see \cite[Lemmas 2.1 and 2.2]{KShSW}, \cite[Lemmas 2 and 3]{Shv}, and \cite[Lemmas 2.2--2.4]{W}).
Spaces with the Daugavet property have the SD$2$P \cite[Theorem 4.4]{ALN1} and are octahedral \cite[Corollary 2.5]{BG&L-P&RZ14--JFA}.

Finally, we can announce our main results:
\emph{An extremely regular subspace of $C_0(L)$}
\begin{itemize}
\item \emph{has the SSD$2$P} (Theorem \ref{thm: A somewhat reg. => A has SSD2P});
\item \emph{is ASQ whenever $L$ is non-compact} (Theorem \ref{thm:asq});
\item \emph{has the Daugavet property whenever $L$ does not contain isolated points}
(Theorem \ref{thm: L does not have isolated points => s.w. reg. A is Daugavet});
\item \emph{contains an $\eps$-isometric copy of $c_0$ whenever $0<\eps<1$} (Theorem \ref{thm: somewhat regular subspace contains c_0}).
\end{itemize}

\noindent%
In fact, we prove these results for a wider class of subspaces of $C_0(L)$ than extremely regular ones,
that we call \emph{somewhat regular} subspaces (see Definition \ref{def: somewhat regular} below).

Throughout the paper, it should not make any confusion to denote, for a functional $\mu\in C_0(L)^\ast$,
its representing (regular) Borel measure also by $\mu$.

\section{Diameter $2$ properties for subspaces of~$C_0(L)$}

\begin{defn}\label{def: somewhat regular}
We call a linear subspace $\mathcal{A}$ of $C_0(L)$ \emph{somewhat regular,}
if, whenever $V$ is a non-empty open subset of $L$ and $0<\eps<1$, there is an $f\in\mathcal{A}$ such that
\begin{equation}\label{eq: ||f||=1, |f|=<eps off V}
\|f\|=1
\quad\text{and}\quad
\text{$|f(x)|\leq\eps$ for every $x\in L\setminus V$.}
\end{equation}
\end{defn}

\noindent
Notice that, in this case, $|f(x_0)|=1$ for some $x_0\in V$,
thus one may choose an $f\in\mathcal{A}$ satisfying \eqref{eq: ||f||=1, |f|=<eps off V} so that $f(x_0)=1$ for some $x_0\in V$.

It is clear that extremely regular subspaces of $C_0(L)$ are somewhat regular.
On the other hand, whenever $x_0$ is an accumulation point of $L$,
the subspace $\{f\in C_0(L)\colon\, f(x_0)=0\}$ of $C_0(L)$ is somewhat regular by courtesy of Urysohn's lemma,
but fails to be extremely regular.
Thus the class of somewhat regular subspaces of $C_0(L)$ is strictly
larger than that of extremely regular ones.

\begin{thm}\label{thm: A somewhat reg. => A has SSD2P}
Somewhat regular linear subspaces of $C_0(L)$ have the SSD$2$P.
\end{thm}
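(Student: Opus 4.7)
My plan is to directly exhibit $x_i \in S_i$ for $i = 1, \dotsc, n$ and $y \in B_\mathcal{A}$ with $\|y\| > 1 - \eps$ satisfying $x_i \pm y \in S_i$. I would begin by extending each $\mu_i \in S_{\mathcal{A}^\ast}$ via Hahn--Banach to a regular Borel measure on $L$ of total variation $1$ (denoted by the same symbol), fix a small parameter $\eta > 0$ to be calibrated against $\eps$ and $\min_i \delta_i$, and choose $f_i \in B_\mathcal{A}$ with $\re \mu_i(f_i) > 1 - \delta_i/2$.

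Next, I would locate the ``small'' region underlying the construction. Since $L$ is infinite and Hausdorff, it admits arbitrarily many pairwise disjoint non-empty open sets, and since $\sum_i |\mu_i|$ is a finite measure, a pigeonhole argument yields a non-empty open $V_0 \subset L$ with $|\mu_i|(V_0) < \eta$ for every $i$. Fixing $x_0 \in V_0$ and using continuity of the finite family $(f_i)$, shrink to an open neighborhood $V \subseteq V_0$ of $x_0$ so that additionally $|f_i(x) - f_i(x_0)| < \eta$ on $V$ for each $i$. Applying somewhat regularity to $V$ (and rotating as in the remark following Definition \ref{def: somewhat regular}) provides $y \in \mathcal{A}$ with $\|y\| = 1$, $|y(x)| \leq \eta$ on $L \setminus V$, and $y$ attaining the value $1$ at some point of $V$. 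I would then set
\[
x_i := f_i - f_i(x_0)\, y \in \mathcal{A},
\]
so that $x_i(x_0) = 0$; this is a variant of the familiar $c_0$-trick $x_i = f_i - f_i(k) e_k$, $y = e_k$.

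The functional check is routine: $|\mu_i(y)| \leq |\mu_i|(V) + \eta \|\mu_i\| \leq 2\eta$, so $\re \mu_i(x_i \pm y)$ differs from $\re \mu_i(f_i)$ by at most $O(\eta)$ and stays inside $S_i$ for $\eta$ small enough. The delicate point, and what I expect to be the main technical hurdle, is the norm bound $\|x_i \pm y\| \leq 1$. Off $V$, the estimate $|y| \leq \eta$ yields $\|x_i \pm y\|_{L \setminus V} \leq 1 + 2\eta$. On $V$, continuity gives $(x_i \pm y)(x) = (f_i(x) - f_i(x_0)) + f_i(x_0)(1 \mp y(x)) \pm y(x)$, so the task reduces to bounding $|f_i(x_0)(1 \mp y(x)) \pm y(x)|$ by $1$. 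In the real case with $y$ constrained to $[0,1]$ on $V$---which may be arranged by further shrinking $V$ so that continuity of $y$ forces $y \approx 1$ throughout, in particular real and positive---this is a convex combination of $\pm f_i(x_0)$ and $\pm 1$, of modulus at most $1$; the complex case requires a closer analysis along the same lines. A final rescaling by $1 + O(\eta)$ then produces $\tilde x_i, \tilde y \in B_\mathcal{A}$ with $\tilde x_i \pm \tilde y \in S_i$ and $\|\tilde y\| > 1 - \eps$, completing the proof.
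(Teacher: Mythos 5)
Your overall skeleton (concentrate on a small open set $V$ of small $|\mu_i|$-measure and small oscillation of the $f_i$, subtract off a peak function, symmetrize with a peak function) is the right one, but the step you yourself flag as ``the main technical hurdle'' is a genuine gap, and the fix you propose for it does not work. Somewhat regularity gives a $y\in\mathcal{A}$ with $\|y\|=1$ and $|y|\leq\eta$ on $L\setminus V$, but it gives \emph{no} information about the values of $y$ inside $V$ beyond $|y|\leq1$. In particular $y$ may equal $-1$ at some point $x\in V$; taking also $f_i(x_0)=-1$, your expression $f_i(x_0)(1-y(x))+y(x)$ equals $-3$, so $\|x_i+y\|$ can be close to $3$ (and already $\|x_i\|=\|f_i-f_i(x_0)y\|$ can be close to $2$, so $x_i\notin B_{\mathcal{A}}$). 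No rescaling by $1+O(\eta)$ repairs this. Your proposed remedy---``further shrinking $V$ so that continuity of $y$ forces $y\approx1$ throughout''---is circular: $y$ was produced \emph{for} $V$, so after shrinking to a smaller $V'$ you only know $|y|\leq\eta$ off the original $V$ and $y\approx1$ on $V'$, while on $V\setminus V'$ the values of $y$ remain completely uncontrolled; and producing a fresh peak function for $V'$ just reproduces the same problem one level down.

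This is precisely the difficulty the paper's Lemma \ref{lem: modification to Dirk and Olav} (a Ces\`aro-averaging iteration in the spirit of Nygaard--Werner) is designed to address, and it is worth noting that even that lemma only yields $|1-f(x)|\leq1+\eps$ on $V$, i.e.\ $f$ stays away from $-1$ but need not be near $1$ on $V$. That weaker bound suffices for the paper because the paper does \emph{not} use one function in both roles: it first forms $h_j=f_j-f_j(x_0)f$ (so $h_j(x_0)=0$ and $\|h_j\|\leq1+2\delta$, using $|1-f|\leq1+\delta$ on $V$), rescales to $g_j$, then chooses a much smaller neighbourhood $U\subset V$ of $x_0$ on which all $g_j$ are uniformly $\leq\delta$ by continuity, and takes the symmetrizing function $\phi$ to be a \emph{second}, independent peak function essentially supported in $U$. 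The bound $\|g_j\pm\phi\|\leq1$ then follows from $|g_j|+|\phi|\leq1$ pointwise, with no need to control the values of $\phi$ inside $U$. To complete your argument you would need both the averaging lemma (to get $\|x_i\|\leq1+O(\eta)$) and this two-function decoupling (to get $\|x_i\pm y\|\leq1$); as written, neither is present.
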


\noindent%
Theorem \ref{thm: A somewhat reg. => A has SSD2P} is a corollary from the following lemma.

\begin{lem}\label{lem: A somewhat reg; provided f_j and mu_i}
Let  $\mathcal{A}$ be a somewhat regular linear subspace of $C_0(L)$,
and let $n,m\in\N$, $f_1,\dotsc,f_n\in B_{\mathcal{A}}$,
$\mu_1,\dotsc,\mu_m\in B_{C_0(L)^{\ast}}$, and $\eps>0$. Then there
are $g_1,\dots,g_n,\phi\in B_{\mathcal{A}}$
such that, for every $j\in\{1,\dotsc,n\}$,
\begin{itemize}
\item[{\rm(1)}]
$|\mu_i(f_j-g_j)|<\eps$ for every $i\in\{1,\dotsc,m\}$;
\item[{\rm(2)}]
$|\mu_i(\phi)|<\eps$ for every $i\in\{1,\dotsc,m\}$;
\item[{\rm(3)}]
$\|\phi\|>1-\eps$;
\item[{\rm(4)}]
$\|g_j\pm\phi\|\leq1$.
\end{itemize}
\end{lem}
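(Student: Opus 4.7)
The plan is to find a small open set $V \subset L$ with small $|\mu_i|$-measure for all $i$, construct a peaking function $\phi \in \mathcal{A}$ essentially concentrated on $V$ via somewhat regularity, and define $g_j := f_j - f_j(x_1) \phi$, where $x_1 \in V$ is a peak point of $\phi$. Conditions (1)--(3) will follow directly; the main obstacle will be verifying (4), namely $\|g_j \pm \phi\| \leq 1$.

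For the choice of $V$: since $L$ is infinite, locally compact, and Hausdorff, it contains infinitely many pairwise disjoint non-empty open subsets, and since $\sum_i |\mu_i|$ is a finite Borel measure, one of these, say $V_0$, satisfies $|\mu_i|(V_0) < \eta$ for all $i$, with $\eta > 0$ small (to be fixed in terms of $\eps$). Pick $x_0 \in V_0$ and, by continuity of the $f_j$'s, shrink to an open neighbourhood $V \subset V_0$ of $x_0$ such that $|f_j(x) - f_j(x_0)| < s$ on $V$ for all $j$, for small $s$. Apply Definition~\ref{def: somewhat regular} to $V$ with small parameter $\delta > 0$ to get $\phi \in \mathcal{A}$ with $\|\phi\| = 1$ and $|\phi(x)| \leq \delta$ on $L \setminus V$; by the remark after Definition~\ref{def: somewhat regular}, we may require $\phi(x_1) = 1$ for some $x_1 \in V$, so that $|f_j(x_1) - f_j(x_0)| < s$. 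Set $g_j := f_j - f_j(x_1)\phi \in \mathcal{A}$.

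Conditions (1)--(3) are then direct. Indeed, $\mu_i(f_j - g_j) = f_j(x_1)\mu_i(\phi)$, and
\[
|\mu_i(\phi)| \leq \int_V |\phi|\, d|\mu_i| + \int_{L \setminus V}|\phi|\, d|\mu_i| \leq |\mu_i|(V) + \delta\, |\mu_i|(L) \leq \eta + \delta,
\]
so (1) and (2) hold once $\eta + \delta < \eps$; and (3) follows from $\|\phi\| = 1 > 1 - \eps$.

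Condition (4) is the hard part. Pointwise, $g_j(x) \pm \phi(x) = f_j(x) + (\pm 1 - f_j(x_1))\phi(x)$. Off $V$, $|\phi| \leq \delta$ gives $|g_j \pm \phi| \leq 1 + 2\delta$. On $V$, writing $f_j(x) = f_j(x_1) + O(s)$ reduces the estimate to bounding $|f_j(x_1) + (\pm 1 - f_j(x_1))\phi(x)|$, which equals $\pm 1$ (hence has modulus $1$) at $x = x_1$; securing a uniform bound of the form $1 + O(s+\delta)$ on the whole of $V$ is the main technical hurdle, and requires a careful choice of $\phi$ (or an iterative refinement of $V$ and $\phi$) so that $\phi(x)$ stays close to $1$ not only at its peak but throughout the relevant part of $V$, in order to avoid the failure case where $\phi(x)$ has ``bad'' argument. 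Once such a bound is secured, a common rescaling $g_j \mapsto g_j/K$, $\phi \mapsto \phi/K$ with $K = 1 + O(s+\delta)$ close to $1$ restores exact $\|g_j \pm \phi\| \leq 1$, at mild cost to $\|\phi\|$, which remains $> 1-\eps$ provided $s$ and $\delta$ are sufficiently small.
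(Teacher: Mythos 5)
Your reduction of everything to condition (4) is the right diagnosis, but the proposal stops exactly where the real work begins, and the remedy you sketch is not the right one. Write $\alpha:=f_j(x_1)$ and $t:=\phi(x)$ for $x\in V$; up to the error $s$ you must bound $|\alpha(1-t)\pm t|$, and taking $|\alpha|=1$ this forces $|1-t|+|t|\leq 1+\mathrm{small}$, i.e.\ $t$ must lie in a small neighbourhood of the \emph{segment} $[0,1]$ --- not merely ``close to $1$ on the relevant part of $V$''. Since $\phi$ is continuous and must descend from $1$ at $x_1$ to modulus $\leq\delta$ off $V$, it necessarily passes through intermediate values inside $V$, and Definition~\ref{def: somewhat regular} gives no control whatsoever over those values: a somewhat regular peak function may equal $-1$ (or $i$, in the complex case) somewhere in $V$, in which case with $\alpha=1$ one gets $|g_j(x)-\phi(x)|\approx|\alpha(1-t)-t|=3$. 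So the ``main technical hurdle'' you flag is a genuine gap: producing a normalized $\phi\in\mathcal{A}$, small off $V$, whose values on all of $V$ stay near $[0,1]$, is precisely the hard content of the lemma, and it cannot be obtained from the bare definition of somewhat regularity by ``a careful choice of $\phi$''.

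The paper sidesteps this difficulty by a two-stage construction rather than your one-function scheme. First it proves a ``Urysohn lemma'' (Lemma~\ref{lem: modification to Dirk and Olav}), by recursively nesting neighbourhoods and averaging $n$ peak functions, yielding $f$ with $f(x_0)=1$, $\|f\|\leq1+\delta$, $|1-f|\leq1+\delta$ on $V$ and $|f|\leq\delta$ off $V$; this $f$ is used only to flatten the $f_j$, i.e.\ $h_j:=f_j-f_j(x_0)f$ satisfies $h_j(x_0)=0$ and $\|h_j\|\leq1+2\delta$. Then, after shrinking to a neighbourhood $U\subset V$ of $x_0$ on which all $g_j:=(1-2\delta)h_j$ have modulus $\leq\delta$, a \emph{second} somewhat-regular bump concentrated on $U$ is taken (after a slight rescaling) as $\phi$. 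Condition (4) then follows from near-disjointness of supports --- where $\phi$ is large the $g_j$ are small, and vice versa --- with no constraint at all on the arguments of the values of $\phi$. If you wish to salvage your single-function approach, you would need to strengthen the Urysohn lemma to give $|\phi|+|1-\phi|\leq1+\delta$ on $V$ (the averaging in the paper's proof of Lemma~\ref{lem: modification to Dirk and Olav} arguably yields this, but it has to be proved); as written, your argument is incomplete at its decisive step.
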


When dealing with subspaces of $C_0(L)$, the main challenge is often to find a substitute for Urysohn's lemma
(see, e.g., \cite[Section 2]{CGK}).
The following lemma---which the proofs of both Lemma \ref{lem: A somewhat reg; provided f_j and mu_i}
and Theorem \ref{thm: L does not have isolated points => s.w. reg. A
  is Daugavet} below rely on---is a ``Urysohn's lemma'' for somewhat
regular subspaces of $C_0(L)$. The lemma is inspired by \cite[proof of Theorem 1]{NW}.

\begin{lem}[cf. {\cite[proof of Theorem 1]{NW}}]\label{lem:
    modification to Dirk and Olav}
Let  $\mathcal{A}$ be a somewhat regular linear subspace of $C_0(L)$,
let $V$ be a non-empty open subset of $L$, and let $0<\eps<1$. Then
there are an $x_0\in V$ and an $f\in\mathcal{A}$ such that
\begin{itemize}
\item[{\rm(1)}]
$f(x_0)=1\leq\|f\|\leq1+\eps$;
\item[{\rm(2)}]
$|1-f(x)|\leq1+\eps$ for every $x\in V$;
\item[{\rm(3)}]
$|f(x)|\leq\eps$ for every $x\in L\setminus V$.
\end{itemize}
\end{lem}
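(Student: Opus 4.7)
The plan is to build $f$ as a normalized arithmetic mean of $N$ functions $f_1,\dotsc,f_N\in\mathcal{A}$ obtained inductively from the somewhat regularity assumption applied to a nested sequence of open sets, following the idea in the proof of Theorem~1 of \cite{NW}. Each $f_k$ will be a ``spike'' of height~$1$ supported, up to small error, in the $k$-th shrunken neighborhood; the average will then be close to the constant~$1$ on~$V$ and near~$0$ off~$V$.

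Fix an integer $N$ with $1/N<\eps/4$ and a decreasing sequence of positive reals $\eps_1>\eps_2>\dotsb>\eps_{N+1}$, chosen so small that both $\eps_1$ and $\sum_{k=1}^{N+1}\sqrt{\eps_k}$ are much less than $\eps$. Put $V_0:=V$. Assuming inductively that $V_{k-1}$ is a nonempty open subset of $L$, apply the somewhat regularity of $\mathcal{A}$ (together with the remark after Definition~\ref{def: somewhat regular}) to choose $f_k\in\mathcal{A}$ and $x_k\in V_{k-1}$ such that $\|f_k\|=1=f_k(x_k)$ and $|f_k(x)|\leq\eps_k$ for every $x\in L\setminus V_{k-1}$; then define $V_k:=\{x\in V_{k-1}\colon\re f_k(x)>1-\eps_{k+1}\}$, which is open and contains $x_k$. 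After $N$ steps, set $g:=\frac{1}{N}\sum_{k=1}^{N}f_k$, $x_0:=x_N$, $\lambda:=g(x_N)$, and $f:=g/\lambda$.

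The key numerical estimate is that whenever $x\in V_k$ one has $\re f_k(x)>1-\eps_{k+1}$ and $|f_k(x)|\leq 1$, which together give $|1-f_k(x)|\leq\sqrt{2\eps_{k+1}}$. Applied to $x=x_N\in V_k$ for each $k<N$, this forces $|1-\lambda|\leq\frac{1}{N}\sum_{k=1}^{N-1}\sqrt{2\eps_{k+1}}$ to be tiny; hence $|\lambda|$ is close to~$1$, $f(x_0)=1$ automatically, and $\|f\|\leq 1/|\lambda|\leq 1+\eps$, yielding (1). For (3), $x\in L\setminus V$ lies outside every $V_{k-1}$, so $|f_k(x)|\leq\eps_k$ for all~$k$ and $|f(x)|\leq\eps$ after normalization. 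For (2), fix $x\in V$ and let $j=j(x)\in\{0,1,\dotsc,N\}$ be the largest index with $x\in V_j$. Then $|1-f_k(x)|\leq\sqrt{2\eps_{k+1}}$ for $1\leq k\leq j$; $|1-f_{j+1}(x)|\leq 2$ when $j<N$; and for $j+2\leq k\leq N$, the inclusion $V_{k-1}\subseteq V_{j+1}$ forces $|f_k(x)|\leq\eps_k$ and $|1-f_k(x)|\leq 1+\eps_k$. Summing and dividing by $N$,
\[
|1-g(x)|\leq\frac{1}{N}\Bigl(\sum_{k=1}^{j}\sqrt{2\eps_{k+1}}+2+(N-j-1)+\sum_{k=j+2}^{N}\eps_k\Bigr)\leq 1+\frac{1}{N}+\eta,
\]
where $\eta$ is a quantity controlled by $\sum\sqrt{\eps_k}$, and normalizing by $\lambda$ near $1$ gives $|1-f(x)|\leq 1+\eps$.

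The main obstacle is the sharpness of the bound in~(2). The ``transitional'' index $k=j+1$, on which neither the nearness-to-$1$ estimate nor the smallness estimate is available, contributes $2/N$ to the average; simultaneously the $N-j-1$ tail indices each contribute almost $1$. These two error sources have to be balanced by taking $N$ large to tame the transitional term and the $\eps_k$ summably small to absorb the tail, so that the bound $1+\eps$ survives uniformly on $V$, in particular in the worst case $j=0$, where $x$ only barely lies in $V$.
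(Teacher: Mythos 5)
Your proof is correct and follows essentially the same route as the paper's: the same recursive construction of nested open sets and unit-norm spikes, the same averaging, the same case analysis on the largest index $j$ with $x\in V_j$ (with the single transitional term bounded by $2$ and the tail terms by $1+\eps_k$), and the same normalization at $x_0=x_N$. The only cosmetic differences are that you define $V_k$ via $\re f_k>1-\eps_{k+1}$ and recover the needed bound through $|1-z|\le\sqrt{2\eps}$ for $|z|\le1$, $\re z>1-\eps$, whereas the paper uses $|g_j-1|<\delta$ directly, and that you use varying tolerances $\eps_k$ in place of a single $\delta$.
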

\begin{proof}
Let $0<\delta<1$ and let $n\in\N$ satisfy $2/n<\delta$. Putting $V_0:=V$, by courtesy of the somewhat regularity of $\mathcal{A}$,
one can recursively find points $x_1,\dotsc,x_n\in V$,
functions $g_1,\dotsc,g_n\in\mathcal{A}$, and nonvoid open subsets $V_0\supset V_1\supset\dotsb\supset V_n$ such that,
for every $j\in\{1,\dotsc,n\}$,
\begin{equation*}
x_j\in V_{j-1},
\qquad
g_j(x_j)=\|g_j\|=1,
\qquad
\text{$|g_j(x)|\leq\delta$ for every $x\in L\setminus V_{j-1}$,}
\end{equation*}
and $V_j=\bigl\{x\in V_{j-1}\colon\,|g_j(x)-1|<\delta\bigr\}$; thus, in fact, $x_j\in V_j$. Defining $x_0:=x_n$ and
\[
g:=\frac{g_1+\dotsb+g_n}{n},
\]
one has $\|g\|\leq1$, $|g(x)|\leq\delta$ for every $x\in L\setminus V$, and
\[
|1-g(x)|\leq\frac1{n}\sum_{j=1}^n|1-g_j(x)|\quad\text{for every $x\in L$.}
\]
Now let $x\in V$. Put $k:=\max\bigl\{j\in\{0,1,\dots,n\}\colon\,x\in V_j\bigr\}$. For $1\leq j\leq k$, one has $|1-g_j(x)|<\delta$;
$\delta\leq|1-g_{k+1}(x)|\leq2$; and, for $k+2\leq j\leq n$, one has $|g_j(x)|\leq\delta$ and hence $|1-g_j(x)|\leq1+\delta$. Thus
\[
|1-g(x)|\leq\frac{(n-1)(1+\delta)+2}{n}<1+\delta+\frac2n<1+2\delta.
\]
Since $x_0=x_n\in V_{n}$, one has $|g_j(x_0)-1|<\delta$ for every $j\in\{1,\dotsc,n\}$ and thus $|g(x_0)-1|<\delta$.
Defining $f:=\bigl(1/g(x_0)\bigr)g$, it remains to observe that, taking, from the very beginning, $\delta$ to be ``small enough'',
the conditions (1)--(3) obtain,
because, since $|g(x_0)|>1-\delta$,
\[
1=f(x_0)\leq\|f\|=\frac{\|g\|}{|g(x_0)|}<\frac{1}{1-\delta},
\]
for every $x\in V$,
\begin{align*}
|1-f(x)|
=\frac{|g(x_0)-g(x)|}{|g(x_0)|}<\frac{|g(x_0)-1|+|1-g(x)|}{1-\delta}<\frac{1+3\delta}{1-\delta},
\end{align*}
and, for every $x\in L\setminus V$,
\[
|f(x)|=\frac{|g(x)|}{|g(x_0)|}<\frac{\delta}{1-\delta}.
\]
\end{proof}

\begin{proof}[Proof of Lemma \ref{lem: A somewhat reg; provided f_j and mu_i}]
Let $0<\delta<1/2$.
Since $L$ is infinite, there is a point $y\in L$ such that $\max\limits_{1\leq i\leq m}|\mu_i|\bigl(\{y\}\bigr)<\delta$;
hence, by the regularity of $\mu_1,\dots,\mu_m$ and the continuity of $f_1\dotsc,f_n$,
there is a non-empty open subset $V$ of $L$ such that
\[
\max\limits_{1\leq i\leq m}|\mu_i|(V)<\delta
\quad\text{and}\quad
\max\limits_{1\leq j\leq n}\sup\limits_{x,z\in V}|f_j(x)-f_j(z)|<\delta.
\]
Since, by our assumption, $\mathcal{A}$ is somewhat regular, there are $x_0\in V$ and $f\in\mathcal{A}$ satisfying
the conditions {\rm(1)}--{\rm(3)} of Lemma \ref{lem: modification to Dirk and Olav} with $\eps$ replaced by $\delta$.
For every $j\in\{1,\dotsc,n\}$, defining $\alpha_j:=f_j(x_0)$ and $h_j:=f_j-\alpha_j\,f\in\mathcal{A}$,
one has $h_j(x_0)=0$ and $\|h_j\|\leq1+2\delta$, because
\[
|h_j(x)|\leq
\begin{cases}
|f_j(x)-\alpha_j|+|\alpha_j|\,|1-f(x)|\leq1+2\delta,&\quad\text{if $x\in V$;}\\
|f_j(x)|+|\alpha_j|\,|f(x)|\leq 1+\delta,&\quad\text{if $x\in L\setminus V$.}
\end{cases}
\]
For every $j\in\{1,\dotsc,n\}$, defining $g_j:=(1-2\delta)h_j$, one has $\|g_j\|\leq1-4\delta^2$ and, for every $i\in\{1,\dots,m\}$,
since
\begin{align*}
|\mu_i(f)|
&\leq\int_{L\setminus V}|f|\,d|\mu_i|+\int_V|f|\,d|\mu_i|\leq\delta\,|\mu_i|(L\setminus V)+2|\mu_i|(V)<3\delta,
\end{align*}
also
\begin{align*}
|\mu_i(f_j-g_j)|\leq2\delta|\mu_i(f_j)|+(1-2\delta)|\alpha_j||\mu_i(f)|<5\delta.
\end{align*}
Choose an open neighbourhood $U\subset V$ of $x_0$ such that
\[
\max\limits_{1\leq j\leq n}\sup\limits_{x\in U}|g_j(x)|\leq\delta.
\]
Since $\mathcal{A}$ is somewhat regular, there
is a
$\psi\in\mathcal{A}$ such that
\[
\|\psi\|=1\quad\text{and}\quad\text{$|\psi(x)|\leq4\delta^2$ for every $x\in L\setminus U$.}
\]
Put $\phi:=(1-\delta)\psi$. Then, for every $j\in\{1,\dotsc,m\}$, one has $\|g_j\pm\phi\|\leq1$, i.e., the condition (4) holds,
and, for every $i\in\{1,\dotsc,m\}$,
\[
|\mu_i(\phi)|
\leq\int_{L\setminus V}|\phi|\,d|\mu_i|+\int_V|\phi|\,d|\mu_i|\leq4\delta^2\,|\mu_i|(L\setminus V)+|\mu_i|(V)< 5\delta.
\]
Thus, one observes that taking, from the very beginning, $\delta$ to be ``small enough'', also the conditions (1)--(3) hold.
\end{proof}

If the space $L$ is non-compact, a stronger statement than that of Theorem~\ref{thm: A somewhat reg. => A has SSD2P} is true.

\begin{thm}\label{thm:asq}
Assume that $L$ is non-compact. Then every somewhat regular linear subspace of $C_0(L)$ is ASQ.
\end{thm}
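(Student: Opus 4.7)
The plan is to exploit the non-compactness of $L$ together with the $C_0$-decay of the given normalized functions to isolate a non-empty open set on which they are all uniformly small, and then to apply the somewhat regular property on that set to obtain the required witness.

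Specifically, given $x_1,\dotsc,x_n\in S_{\mathcal{A}}$ and $\eps>0$, I would first form, for each $i$, the set $K_i:=\{x\in L\colon |x_i(x)|\geq\eps\}$, which is compact since $x_i\in C_0(L)$. Setting $K:=\bigcup_{i=1}^n K_i$, this remains compact, so $V:=L\setminus K$ is open; crucially, it is non-empty, because $L$ is non-compact. Applying somewhat regularity to $V$, I would obtain $\phi\in\mathcal{A}$ with $\|\phi\|=1$ and $|\phi(x)|\leq\eps$ for every $x\in K$.

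It then remains to verify $\|x_i\pm\phi\|\leq1+\eps$ for each $i$. For $x\in V$, the definition of $K$ forces $|x_i(x)|<\eps$, while $|\phi(x)|\leq1$, so $|x_i(x)\pm\phi(x)|<1+\eps$. For $x\in K$, one has $|\phi(x)|\leq\eps$ and $|x_i(x)|\leq1$, so again $|x_i(x)\pm\phi(x)|\leq1+\eps$. Choosing a null sequence $(\eps_k)$ with $\eps_k\to0$ and letting $y_k$ be the $\phi$ produced for $\eps_k$, one obtains a sequence in $B_{\mathcal{A}}$ with $\|y_k\|=1$ and $\|x_i\pm y_k\|\to1$ for every $i$, which is precisely the ASQ condition.

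I do not anticipate a serious obstacle: unlike the proof of Theorem~\ref{thm: A somewhat reg. => A has SSD2P}, where one must approximate a finite list of measures simultaneously and therefore needs the delicate Urysohn-type Lemma~\ref{lem: modification to Dirk and Olav}, the sup-norm control required by ASQ can be extracted directly from Definition~\ref{def: somewhat regular}, because the ``essential supports'' of $x_1,\dotsc,x_n$ are compact. The only real ingredient is a non-empty open set disjoint from those essential supports, and this is exactly what the non-compactness of $L$ guarantees.
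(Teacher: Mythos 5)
Your proposal is correct and follows essentially the same route as the paper: form the compact set $K=\bigcup_j\{x\colon |f_j(x)|\geq\eps\}$, use non-compactness of $L$ to get the non-empty open complement $V$, and apply somewhat regularity on $V$ to produce the witness function. The only point left implicit (in the paper as well) is the lower bound $\|f_j\pm\phi\|\geq 1-2\eps$, which follows since $\phi$ nearly attains its norm at a point of $V$ where $|f_j|<\eps$.
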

\begin{proof}
Let $\mathcal{A}$ be a somewhat regular linear subspace of $C_0(L)$, and let $n\in\N$, $f_1,\dotsc,f_n\in S_{\mathcal{A}}$, and $\eps>0$.
It suffices to find an $f\in\mathcal{A}$ such that $\|f\|=1$ and
\begin{equation}\label{eq: ||f_j +- f|| =< 1+eps}
\|f_j\pm f\|\leq1+\eps
\quad\text{for every $j\in\{1,\dotsc,n\}$.}
\end{equation}
To this end, observe that the sets $K_j:=\bigl\{x\in L\colon\,|f_j(x)|\geq\eps\bigr\}$, $j=1,\dots,n$, are compact;
thus also their union $K:=\bigcup_{j=1}^n K_j$ is compact, and its complement $V:=L\setminus K$ is non-empty and open.
By the somewhat regularity of~$\mathcal{A}$, there is an $f\in\mathcal{A}$ satisfying \eqref{eq: ||f||=1, |f|=<eps off V}.
This $f$ also satisfies \eqref{eq: ||f_j +- f|| =< 1+eps}.
\end{proof}

Our next result produces examples of spaces with the Daugavet property.

\begin{thm}\label{thm: L does not have isolated points => s.w. reg. A is Daugavet}
Assume that $L$ does not contain isolated points. Then every somewhat regular linear subspace of $C_0(L)$ has the Daugavet property.
\end{thm}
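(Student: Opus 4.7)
The plan is to verify the Daugavet property via the standard slice characterisation: for every $f\in S_{\mathcal{A}}$, every $\mu\in S_{\mathcal{A}^\ast}$, and every $\eps>0$, it suffices to produce a $g\in B_{\mathcal{A}}$ with $\re\mu(g)>1-\eps$ and $\|f-g\|>2-\eps$. Extend $\mu$ by Hahn--Banach to a norm-one functional on $C_0(L)$ and identify it with its representing regular Borel measure, still denoted $\mu$. Fix $\delta>0$ small and $n\in\N$ large (both to be chosen in terms of $\eps$), and pick an auxiliary $g_0\in B_{\mathcal{A}}$ with $\re\mu(g_0)>1-\delta$.

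The hypothesis on $L$ enters here: the non-empty open set $U:=\{x\in L\colon|f(x)|>1-\delta\}$ contains no isolated points of $L$ and is therefore infinite, while $|\mu|$ has only finitely many atoms of mass $\geq\delta$; hence one may select $k_0\in U$ with $|\mu|(\{k_0\})<\delta$. Regularity of $|\mu|$ together with continuity of $f$ and $g_0$ then produces an open neighbourhood $V$ of $k_0$ with $|\mu|(V)<2\delta$ and $|f(x)-f(k_0)|,\,|g_0(x)-g_0(k_0)|<\delta$ for every $x\in V$.

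Next, rerun the inductive construction in the proof of Lemma~\ref{lem: modification to Dirk and Olav} inside $V$, stopping before the final normalisation: this yields nested non-empty open sets $V=V_0\supset V_1\supset\dotsb\supset V_n$, points $x_j\in V_j$, and functions $g_j\in\mathcal{A}$ with $\|g_j\|=g_j(x_j)=1$, $|g_j(x)|\leq\delta$ on $L\setminus V_{j-1}$, and $V_j=\bigl\{x\in V_{j-1}\colon|g_j(x)-1|<\delta\bigr\}$. Put $\phi:=(g_1+\dotsb+g_n)/n\in\mathcal{A}$ and $y_0:=x_n\in V_n$. The essential observation is that, for $x\in V_{j-1}\setminus V_j$ (with $1\leq j\leq n$), one has $g_l(x)=1+O(\delta)$ for $l<j$ (as $x\in V_l$) and $|g_l(x)|\leq\delta$ for $l>j$ (as $x\notin V_{l-1}$), producing the graded identity
\[
\phi(x)=\frac{j-1}{n}+\frac{g_j(x)}{n}+O(\delta).
\]
Set $\lambda_0:=-f(y_0)/|f(y_0)|$ (so $|\lambda_0|=1$) and $c:=\lambda_0-g_0(y_0)$ (so $|c|\leq 2$), and define $g:=g_0+c\phi\in\mathcal{A}$.

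Then $g(y_0)=\lambda_0+O(\delta)$ and $|f(y_0)-\lambda_0|=|f(y_0)|+1>2-2\delta$, whence $|f(y_0)-g(y_0)|>2-O(\delta)$. For $x\in V_{j-1}\setminus V_j$, combining the graded formula for $\phi$ with $g_0(x)=g_0(y_0)+O(\delta)$ gives
\[
g(x)=\Bigl(1-\tfrac{j-1}{n}\Bigr)g_0(y_0)+\tfrac{j-1}{n}\,\lambda_0+\tfrac{c\,g_j(x)}{n}+O(\delta),
\]
i.e.\ a convex combination of two scalars of modulus at most $1$ plus an error of modulus at most $2/n+O(\delta)$; analogous (easier) estimates on $V_n$ and on $L\setminus V$ then give $\|g\|\leq 1+2/n+O(\delta)$. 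Meanwhile $|\mu(\phi)|\leq\|\phi\|\,|\mu|(V)+\delta\|\mu\|\leq 3\delta$, so $\re\mu(g)\geq 1-\delta-|c|\cdot 3\delta\geq 1-7\delta$. Normalising $\tilde g:=g/\max(1,\|g\|)\in B_{\mathcal{A}}$ perturbs each estimate by at most $O(1/n+\delta)$, and choosing $\delta$ small and $n$ large completes the proof. The main obstacle is controlling $\|g\|$ on the transition region $V\setminus V_n$: the peaking function provided by the \emph{statement} of Lemma~\ref{lem: modification to Dirk and Olav} yields no pointwise information there, but the averaged $\phi$ from its \emph{proof} does, through the convex-combination representation above, which works verbatim for both $\K=\R$ and $\K=\C$.
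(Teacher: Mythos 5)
Your proposal is correct; I checked the bookkeeping and it goes through. The graded identity $\phi(x)=\tfrac{j-1}{n}+\tfrac{g_j(x)}{n}+O(\delta)$ on $V_{j-1}\setminus V_j$ is valid (for $l<j$ one has $x\in V_{j-1}\subset V_l$, hence $|g_l(x)-1|<\delta$; for $l>j$ one has $x\notin V_j\supset V_{l-1}$, hence $|g_l(x)|\le\delta$), and it does yield $\|g_0+c\phi\|\le 1+2/n+O(\delta)$ via the pointwise convex-combination representation, while $|\mu(\phi)|\le 3\delta$ and $g(y_0)=\lambda_0+O(\delta)$ give the slice condition and $\|f-g\|>2-O(\delta)$. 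Your reduction to producing $g$ in a slice with $\|f-g\|>2-\eps$ is the standard characterization applied to $-f$, which is harmless since $f$ ranges over all of $S_{\mathcal{A}}$.

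The route differs from the paper's in how the witness function is assembled. The paper uses the statement of Lemma \ref{lem: modification to Dirk and Olav} as a black box: it takes the nearly $\mu$-norming function, kills it at a point $x_0\in V$ by subtracting its value times the lemma's function $f$ (the bound $|1-f(x)|\le1+\eps$ on $V$ is exactly what keeps the norm of the difference $h$ at most $1+3\delta$), and then restores a peak by adding $g(w_0)\widehat{f}$ for a \emph{second}, sharply localized peak function supported in a smaller neighbourhood $W$ where $|h|<\delta$. You instead perturb the norming function in a single step, $g_0+c\phi$, at the price of opening up the proof of Lemma \ref{lem: modification to Dirk and Olav}: as you correctly observe, the crude bound $|1-f(x)|\le1+\eps$ from the lemma's statement cannot control $|g_0(x)+cf(x)|$ on the transition region, whereas the staircase structure of the averaged $\phi$ shows that $g_0+c\phi$ is pointwise, up to $2/n+O(\delta)$, a convex combination of $g_0(y_0)$ and the unimodular scalar $\lambda_0$. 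In effect you trade the paper's second application of somewhat regularity (the function $\widehat{f}$ and the shrunken neighbourhood $W$) for a finer analysis of the averaging construction; both arguments are equally elementary and work in the real and complex cases alike, and yours arguably isolates more transparently why the averaging trick does the work.
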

\begin{proof}
Let $\mathcal{A}$ be a somewhat regular linear subspace of $C_0(L)$, let $g\in S_{\mathcal{A}}$,
let $\mu\in S_{C_0(L)^\ast}$ be such that $\|\mu|_{\mathcal{A}}\|=1$, and let $\alpha,\eps>0$.
In order for $\mathcal{A}$ to have the Daugavet property, by \cite[Lemma 2.2]{W} (or \cite[Lemma 2.2]{KShSW}),
it suffices to find a $\psi\in S_{\mathcal{A}}$ satisfying
\begin{equation}\label{eq: condition for Daugavetness for A }
\re\mu(\psi)>1-\alpha
\quad\text{and}\quad
\|g+\psi\|>2-\eps.
\end{equation}

To this end, let $\delta\in(0,1/3)$, let $\phi\in S_{\mathcal{A}}$ be such that $\re\mu(\phi)>1-\delta$,
let $y_0\in L$ be such that $|g(y_0)|=1$, and let an open neighbourhood $U$ of $y_0$ be such that
\[
|g(x)-g(y_0)|<\delta
\quad\text{and}\quad
|\phi(x)-\phi(y_0)|<\delta
\qquad\text{for all $x\in U$.}
\]
Since $y_0$ is not an isolated point, the set $U$ is infinite; thus there is a point $z_0\in U$ such that $|\mu|\bigl(\{z_0\}\bigr)<\delta$.
By the regularity of $\mu$, there is an open neighbourhood $V$ of $z_0$ such that $|\mu|(V)<\delta$.
One may assume that $V\subset U$ and thus
\[
|\phi(x)-\phi(z)|<2\delta
\quad\text{for all $x,z\in V$.}
\]
Since $\mathcal{A}$ is somewhat regular, there are $x_0\in V$ and $f\in\mathcal{A}$
satisfying the conditions {\rm(1)}--{\rm(3)} of Lemma \ref{lem: modification to Dirk and Olav} with $\eps$ replaced by $\delta$.
Put $h:=\phi-\phi(x_0)f$; then $h(x_0)=0$ and $\|h\|\leq1+3\delta$
(this can be shown as in the proof of Theorem \ref{thm: A somewhat reg. => A has SSD2P} for $h_j$); indeed,
\[
|h(x)|\leq
\begin{cases}
|\phi(x)-\phi(x_0)|+|\phi(x_0)|\,|1-f(x)|\leq1+3\delta,&\quad\text{if $x\in V$;}\\
|\phi(x)|+|\phi(x_0)|\,|f(x)|\leq 1+\delta,&\quad\text{if $x\in L\setminus V$.}
\end{cases}
\]
Since $h(x_0)=0$, there is an open neighbourhood $W$ of $x_0$ such that
\[
|h(x)|<\delta\quad\text{for all $x\in W$.}
\]
One may assume that $W\subset V$. Since $\mathcal{A}$ is somewhat regular, there are $w_0\in W$ and $\widehat{f}\in\mathcal{A}$
such that
\[
\widehat{f}(w_0)=\|\widehat{f}\|=1
\qquad\text{and}\qquad
|\widehat{f}(x)|\leq\delta\quad\text{for every $x\in L\setminus W$.}
\]
Putting $\widehat{\psi}:=h+g(w_0)\widehat{f}$, one has $\|\widehat{\psi}\|\leq1+4\delta$, because
\[
|\widehat{\psi}(x)|\leq|h(x)|+|\widehat{f}(x)|
\leq
\begin{cases}
\delta+1,&\quad\text{if $x\in W$;}\\
(1+3\delta)+\delta=1+4\delta,&\quad\text{if $x\in L\setminus W$.}
\end{cases}
\]
Since
\begin{align*}
\|\widehat{\psi}+g\|
&\geq|\widehat{\psi}(w_0)+g(w_0)|\geq2|g(w_0)|-|h(w_0)|\\
&\geq2|g(y_0)|-2|g(y_0)-g(w_0)|-|h(w_0)|\\
&>2-2\delta-\delta=2-3\delta,
\end{align*}
one has $\|\widehat{\psi}\|>1-3\delta$; thus, for $\psi:=\widehat{\psi}/\|\widehat{\psi}\|$, one has
\[
\|\widehat{\psi}-\psi\|=\biggl|1-\frac{1}{\|\widehat{\psi}\|}\biggr|\|\widehat{\psi}\|=\bigl|\|\widehat{\psi}\|-1\bigr|\leq4\delta,
\]
and hence
\[
\|g+\psi\|\geq\|g+\widehat{\psi}\|-\|\widehat{\psi}-\psi\|>2-3\delta-4\delta=2-7\delta.
\]
One has
\begin{align*}
\re\mu(\widehat{\psi})
&=\re\mu(h)+\re g(w_0)\mu(\widehat{f})
=\re\mu(\phi)-\re\phi(x_0)\mu(f)+\re g(w_0)\mu(\widehat{f})\\
&>1-\delta-|\mu(f)|-|\mu(\widehat{f})|.
\end{align*}
Since
\begin{align*}
|\mu(f)|
&\leq\left|\int_{L}f\,d\mu\right|\leq\int_{L}|f|\,d|\mu|=\int_{V}|f|\,d|\mu|+\int_{L\setminus
  V}|f|\,d|\mu|\\
&\leq(1+\delta)|\mu|(V)+\delta|\mu|(L\setminus
  V)<(1+\delta)\delta+\delta=(2+\delta)\delta<3\delta,
\end{align*}
and, similarly, $|\mu(\widehat{f})|<2\delta$, it follows that
$\re\mu(\widehat{\psi})>1-6\delta$, and thus
\[
  \re\mu(\psi)=\frac{\re\mu(\widehat{\psi})}{\|\widehat{\psi}\|}\geq\frac{\re\mu(\widehat{\psi})}{1+4\delta}>\frac{1-6\delta}{1+4\delta}.
\]
Hence one observes that, choosing, from the very beginning, $\delta$
to be ``small enough'', the function $\psi$ meets the conditions
\eqref{eq: condition for Daugavetness for A }.
\end{proof}

\section{Containment of $c_0$ and $\ell_1$}

Let $X$ and $Y$ be normed spaces, and let $0<\eps<1$. Recall that a
linear surjection $T\colon\,X\to Y$ is called an
\emph{$\eps$-isometry} if
\[
(1-\eps)\|x\|\leq \|Tx\|\leq (1+\eps)\|x\|\quad\text{for every $x\in X$.}
\]

It is well known that $C_0(L)$ contains isometric copies of $c_0$ (see
e.g \cite[Proposition~4.3.11]{MR2192298}), and
the same is true for many of its subspaces. For the somewhat regular
linear subspaces of $C_0(L)$ we have the following theorem.

\begin{thm}\label{thm: somewhat regular subspace contains c_0}
Let $\mathcal{A}$ be a somewhat regular closed linear subspace of $C_0(L)$.
Then, whenever $0<\eps<1$, there is an $\eps$-isometry from $c_0$ onto
a closed linear subspace of $\mathcal{A}$.
\end{thm}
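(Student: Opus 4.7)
The plan is as follows. Fix $\eps\in(0,1)$ and choose positive scalars $\delta_n$ with $\sum_{n=1}^{\infty}\delta_n<\eps$ (say $\delta_n:=\eps/2^{n+1}$). I aim to construct pairwise disjoint non-empty open sets $V_n\subset L$, together with functions $f_n\in\mathcal{A}$ and points $x_n\in V_n$ satisfying $\|f_n\|=1$, $f_n(x_n)=1$, and $|f_n(x)|\le\delta_n$ for every $x\in L\setminus V_n$; the operator $T\colon c_0\to\mathcal{A}$ given by $T((a_n)):=\sum_{n=1}^{\infty}a_n f_n$ will then be the required $\eps$-isometry onto its range.

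For the construction of the $V_n$, I would induct while maintaining the invariant that after $n$ steps there is a non-empty open ``reservoir'' $W_n\subset L$ disjoint from $V_1,\dots,V_n$ and containing infinitely many points. If $W_n$ has a limit point $p\in L$, I pick $y\in W_n$ with $y\ne p$ and use regularity of $L$ (locally compact Hausdorff implies $T_3$) to choose an open $V_{n+1}\ni y$ with $V_{n+1}\subset W_n$ and $\overline{V_{n+1}}$ disjoint from some open neighborhood $U$ of $p$; since $L$ is Hausdorff (hence $T_1$), $U\cap W_n$ is infinite, so $W_{n+1}:=W_n\setminus\overline{V_{n+1}}\supset U\cap W_n$ remains infinite. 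Otherwise $W_n$ has no limit point in $L$, forcing every point of $W_n$ to be isolated in $L$ (since $W_n$ is open), and I take $V_{n+1}$ to be any singleton $\{z\}\subset W_n$, leaving $W_{n+1}=W_n\setminus\{z\}$ still infinite. Each $f_n$ then arises by applying somewhat regularity of $\mathcal{A}$ (Definition~\ref{def: somewhat regular} together with the remark after it) to the open set $V_n$ with parameter $\delta_n$.

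What remains is a routine disjoint-supports calculation. Each $y\in L$ lies in at most one $V_n$, so the partial sums $\sum_{n\le N}a_n f_n$ are Cauchy in $C_0(L)$ for any $(a_n)\in c_0$, and closedness of $\mathcal{A}$ gives $T(a)\in\mathcal{A}$. For the upper bound, at $y\in V_k$ one has $|T(a)(y)|\le|a_k|+\sum_{n\ne k}|a_n|\delta_n\le(1+\eps)\|a\|_\infty$, and the bound is even better at $y\notin\bigcup_n V_n$; hence $\|T(a)\|_\infty\le(1+\eps)\|a\|_\infty$. For the lower bound, since $x_k\notin V_n$ for $n\ne k$, evaluation at $x_k$ yields $|T(a)(x_k)|\ge|a_k|-\eps\|a\|_\infty$, and taking the supremum over $k$ gives $\|T(a)\|_\infty\ge(1-\eps)\|a\|_\infty$. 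Therefore $T$ is bounded below, its range is a closed linear subspace of $\mathcal{A}$, and $T$ is the desired $\eps$-isometry onto this subspace. The main obstacle is the reservoir construction: producing infinitely many pairwise disjoint non-empty open sets in $L$ without exhausting the space prematurely, which is precisely what the two-case split above is designed to handle.
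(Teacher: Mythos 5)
Your proposal is correct and follows essentially the same route as the paper: pairwise disjoint non-empty open sets, somewhat regularity applied with geometrically decaying tolerances $\eps/2^{n+1}$, and the standard disjoint-supports estimate giving $(1-\eps)\|a\|\leq\|Ta\|\leq(1+\eps)\|a\|$. The only difference is that you spell out the construction of infinitely many pairwise disjoint non-empty open subsets of an infinite Hausdorff space (which the paper simply asserts), and your two-case argument for that is sound.
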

\begin{proof}
Let $0<\eps<1$. Choose pairwise disjoint nonvoid open subsets $U_j$,
$j\in\N$, of $L$.
Since $\mathcal{A}$ is somewhat regular, for every $j\in\N$, there are
an $x_j\in U_j$ and an $f_j\in\mathcal{A}$ such that
\[
f_j(x_j)=1
\qquad
\text{and}
\qquad
|f_j(x)|\leq\dfrac{\eps}{2^j}\quad \text{for every $x\in L\setminus V_j$.}
\]
Denoting by $c_{00}$ the linear subspace of finitely supported sequences in $c_0$,
let $S_0\colon\,c_{00}\to\mathcal{A}$ be the linear operator
satisfying $S_0e_j=f_j$ for every $j\in\N$
where $e_j$ are the standard unit vectors in $c_0$.
Observing that, whenever $a=\sum_{j=1}^n\alpha_j e_j\in S_{c_{00}}$
and $x\in L$, one has
\begin{align*}
|(S_0a)(x)|
&=\biggl|\sum_{j=1}^n\alpha_j
  f_j(x)\biggr|\leq\sum_{j=1}^n|f_j(x)|\leq1+\sum_{j=1}^n\dfrac{\eps}{2^j}<1+\eps
\end{align*}
(because $|f_j(x)|\leq\eps/2^j$ whenever $x\not\in U_j$, and there is
at most one $j\in\N$ such that $x\in U_j$
(in which case $|f_j(x)|\leq1$)),
thus $S_0$ is bounded and $\|S_0\|\leq1+\eps$. Letting $S\colon\,
c_0\to\mathcal{A}$ be the bounded linear extension of $S_0$,
one has $\|S\|\leq1+\eps$ as well, and it remains to observe that,
whenever $a=(\alpha_j)_{j=1}^\infty\in c_0$,
picking $k\in\N$ such that $|\alpha_k|=\|a\|$, one has
\begin{align*}
\|Sa\|
&=\biggl\|\sum_{j=1}^\infty\alpha_j
  f_j\biggr\|\geq\biggl|\sum_{j=1}^\infty\alpha_j f_j(x_k)\biggr|
\geq|\alpha_k|\,|f_k(x_k)|-\sum_{\substack{j=1\\j\not=k}}^\infty|\alpha_j|\,|f_j(x_k)|\\
&\geq\|a\|-\|a\|\sum_{j=1}^\infty\frac{\eps}{2^j}=(1-\eps)\|a\|,
\end{align*}
because, for $j\ne k$, one has $x_k\not\in V_j$ and thus $|f_j(x_k)|\leq\eps$.
\end{proof}

It is natural to ask about containment of $\ell_1$ in somewhat regular linear
subspaces of $C_0(L)$. If $L$ does not contain isolated points, we have from
Theorem \ref{thm: L does not have isolated points => s.w. reg. A is Daugavet}
and \cite[Theorem~2.9]{KShSW} that all somewhat regular linear subspaces
of $C_0(L)$ contain $\ell_1$ (even asymptotically isometric copies of
$\ell_1$). But, if $L$ contains isolated points, the picture is not so
clear. In this case there might be somewhat regular subspaces of
$C_0(L)$ which contain $\ell_1$ and other such subspaces which do
not. For an example, take $C(\beta \mathbb N)$ and its subspaces $X =
\{f \in C(\beta \mathbb N)\colon f(x) =0\,\text{ for every } x \in \beta \mathbb N
\setminus \mathbb N\}$ and $Y = \{f \in C(\beta \mathbb N)\colon f(y) = 0\}$
where $y \in \beta \mathbb N \setminus \mathbb N$ is a fixed element.
It is straightforward to show that both these subspaces are somewhat
regular. Moreover, $X$ is isometrically isomorphic to $c_0$ and $Y$ is
isomorphic to $C(\beta \mathbb N)$.

\providecommand{\bysame}{\leavevmode\hbox to3em{\hrulefill}\thinspace}
\providecommand{\MR}{\relax\ifhmode\unskip\space\fi MR }
\providecommand{\MRhref}[2]{%
  \href{http://www.ams.org/mathscinet-getitem?mr=#1}{#2}
}
\providecommand{\href}[2]{#2}


\end{document}